\newcommand{\keywordname}{{\bf Keywords:}}
\newcommand{\keywords}[1]{\par\addvspace\baselineskip
	\noindent\keywordname\enspace\ignorespaces#1}
\newtheorem{theorem}{Theorem}
\newtheorem{lemma}[theorem]{Lemma}
\newtheorem{corollary}[theorem]{Corollary}
\newtheorem{proposition}[theorem]{Proposition}
\newcommand{\exu}[2]{\mathbb{E}_{#1}\!\left[#2\right]}
\renewcommand{\Pr}[1]{\mathbb{P}\left[\,#1\,\right]}
\newcommand{\Ex}[1]{\mathbb{E} \left[\,#1\,\right]}
\newcommand{\Exu}[2]{\mathbb{E}_{#1} \left[\,#2\,\right]} 
\newcommand{\Pruc}[3]{\mathbb{P}_{#1}^{#2} \left[\,#3\,\right]} 
\newcommand{\Exuc}[3]{\mathbb{E}_{#1}^{#2} \left[\,#3\,\right]}
\title{The Cover Time of a (Multiple) Markov Chain with Rational Transition Probabilities is Rational}
\author{John Sylvester 
\\
  {\small School of Computing Science, University of Glasgow, Glasgow, UK }\\
{ \small \tt john.sylvester@glasgow.ac.uk }}
\date{}
\begin{document}
\maketitle
\begin{abstract}The cover time of a Markov chain on a finite state space is the expected time until all states are visited. We show that if the cover time of a discrete-time Markov chain with rational transitions probabilities is bounded, then it is a rational number. The result is proved by relating the cover time of the original chain to the hitting time of a set in another higher dimensional chain. We prove this result in a more general setting where $k\geq 1 $ independent copies of a Markov chain are run simultaneously on the same state space.
\end{abstract}
\keywords{Cover time, Markov chain, rational number, multiple random walk.}
\section{Introduction and Results}
 Let $(X_t)_{t\geq 0}$ be a discrete-time Markov chain with transition matrix   $\mathbf P$ on a state space $\Omega$, see \cite{aldousfill,LevinPeres} for background.  We say a chain is \textit{rational} if all its transition probabilities are rational numbers, i.e.\ $\mathbf{P}(x,y) \in\mathbb{Q}$ for all $x,y \in \Omega$. 
The stopping time $\tau_{\mathsf{cov}}$ is the first time all states are visited, that is
\[ \tau_{\mathsf{cov}}: = \inf \left\{t\geq 0 : \bigcup_{k=0}^t\{X_k \} = \Omega \right\} . \] For $x \in \Omega$, let $\Exu{x}{\tau_{\mathsf{cov}}}=\Ex{\tau_{\mathsf{cov}}\mid X_0=x}$ be the \textit{cover time} from $x$, that is, the expected time for the chain to visit all states when started from $x\in \Omega$. 

Along with mixing and hitting times, the cover time is one of the most natural and well studied stopping times for a Markov chain and has found applications in the analysis of algorithms, see for example \cite{MultiAlon}, \cite[Ch.\ 6.8]{aldousfill} and \cite[Ch.\ 11]{LevinPeres}. It is clear that the stopping time $\tau_{\mathsf{cov}}$ is a natural number, however it is not so clear whether the cover time $\Exu{x}{\tau_{\mathsf{cov}}}$ is rational, even if the transition probabilities are rational. Our main result shows that, under some natural assumptions, the cover time of a rational Markov chain is rational.

\begin{theorem}\label{covrat}Let $(X_t)_{t\geq 0}$ be a discrete-time rational Markov chain on a finite state space $\Omega$. Then, for any $x \in \Omega$ such that $\exu{x}{\tau_{\mathsf{cov}}}<\infty$, we have $\exu{x}{\tau_{\mathsf{cov}}}\in \mathbb{Q}$.
\end{theorem}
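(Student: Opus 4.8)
The plan is to realize the cover time as the hitting time of a target set in an enlarged chain, and then to invoke the classical fact that expected hitting times in a finite chain solve a rational linear system. First I would augment the state space by recording the set of states visited so far: consider the process $Y_t = (X_t, S_t)$ on $\Omega \times 2^{\Omega}$, where $S_t = \{X_0, \dots, X_t\}$. This is again a Markov chain, with a transition from $(y,S)$ to $(z, S \cup \{z\})$ occurring with probability $\mathbf P(y,z)$; crucially these are the same rational numbers, so $(Y_t)_{t\geq 0}$ is a finite rational chain. Writing $A = \{(y,\Omega) : y \in \Omega\}$ for the set of fully covered configurations, the cover time from $x$ is exactly the expected hitting time of $A$ by $(Y_t)$ started at $(x,\{x\})$, since $S_t$ is nondecreasing and first equals $\Omega$ at time $\tau_{\mathsf{cov}}$.

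The core step is then to solve for these hitting times by linear algebra over $\mathbb{Q}$. Let $R$ denote the set of configurations reachable from $(x,\{x\})$, and for $(y,S) \in R$ put $h(y,S) = \exu{(y,S)}{\tau_A}$. Conditioning on the first step gives, for $(y,S) \in R \setminus A$,
\[ h(y,S) = 1 + \sum_{z \in \Omega} \mathbf P(y,z)\, h(z, S\cup\{z\}), \]
with $h \equiv 0$ on $A$; every configuration appearing on the right also lies in $R$, so this is a closed finite system with rational coefficients. Writing $Q$ for the substochastic matrix indexed by $R \setminus A$ recording the coefficients above, the system reads $(I - Q)h = \mathbf 1$.

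The obstacle I expect to be the crux is ensuring this system has a unique, and hence provably rational, solution, because the hypothesis only guarantees finiteness of the hitting time from the single start configuration $(x,\{x\})$. To handle this I would first show, via the strong Markov property, that finiteness propagates: if some $(y,S) \in R$ were reached with positive probability yet satisfied $\exu{(y,S)}{\tau_A} = \infty$, then following a fixed positive-probability finite path to it would already force $\exu{(x,\{x\})}{\tau_A} = \infty$, contradicting the hypothesis. Hence $h$ is finite on all of $R$, and in particular $A$ is reached almost surely from every configuration of $R \setminus A$. This makes $\Pru{(y,S)}{\tau_A > n} \to 0$, so $Q^n \to 0$ entrywise and the spectral radius of $Q$ is strictly less than $1$. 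Consequently $I - Q$ is invertible, and since both it and $\mathbf 1$ have rational entries, $h = (I-Q)^{-1}\mathbf 1$ is a rational vector; reading off $h(x,\{x\}) = \exu{x}{\tau_{\mathsf{cov}}}$ yields the claim. The same argument should extend to the $k$-copy setting by tracking the joint visited set of all walkers on the product state space $\Omega^{k} \times 2^{\Omega}$.
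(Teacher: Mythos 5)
Your proof is correct, and its overall architecture coincides with the paper's: both augment the state space to pairs $(X_t,S_t)$ with $S_t$ the set of visited states (the paper's auxiliary chain $\mathbf{Q}(\mathbf{P},k)$, constructed there for general $k\geq 1$ and justified by an explicit two-way coupling in Lemma \ref{multicovashit}), identify the cover time with the hitting time of the fully-covered set, and finish by solving a rational linear system for hitting times. Where you genuinely diverge is the crux of that last step: proving the system matrix is non-singular. The paper restricts the system to $B(S)$, the set of \emph{all} states with finite expected hitting time, shows the matrix $\mathbf{I}-\mathbf{M}$ is weakly chained diagonally dominant, and invokes a cited result of Azimzadeh and Forsyth for non-singularity. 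You instead restrict to the states reachable from the start configuration, propagate finiteness along positive-probability paths (valid here because the visited-set coordinate is monotone, so any path to a configuration outside the target set $A$ avoids $A$ entirely), deduce almost-sure absorption from every reachable configuration, and conclude $Q^n \to 0$ entrywise, hence $\rho(Q)<1$ and $I-Q$ is invertible. Your route buys self-containedness: it replaces the external weakly-chained-diagonal-dominance citation with a classical probabilistic argument about substochastic matrices. The paper's route buys a cleaner, more general intermediate statement (Proposition \ref{hitrat+}: rationality of $\exu{x}{\tau_S}$ for every $x\in B(S)$ in any rational chain, with no reachability hypothesis), which is then applied as a black box to the auxiliary chain. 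One small point you should make explicit: rationality of $(I-Q)^{-1}$ itself requires an argument (Gaussian elimination or Cramer's rule preserves $\mathbb{Q}$), which the paper isolates as Lemma \ref{Ratsol}; your sentence asserting $h=(I-Q)^{-1}\mathbf{1}$ is rational silently uses this.
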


The assumption that $\Omega$ is finite is necessary to ensure the cover time is bounded. Recall that a Markov chain is \textit{irreducible} if for every $x,y\in \Omega$ there exists some $t\geq 0$ such that $\mathbf{P}^t(x,y)>0$, where $\mathbf{P}^t(x,y)$ denotes the probability a chain started at $x$ is at state $y$ after $t\geq 1$ steps. Theorem \ref{covrat} does not require irreducibility, just that the cover time from the given start vertex is bounded. An example of a non-irreducible Markov chain to which we can apply Theorem \ref{covrat} is given in Figure \ref{fig:example}. In this example the cover time from $x$ is bounded however, the cover time from any other vertex is unbounded/undefined, as if a walk starts from any other vertex, then $x$ (and possibly also the vertex immediately right of $x$) cannot be reached. 

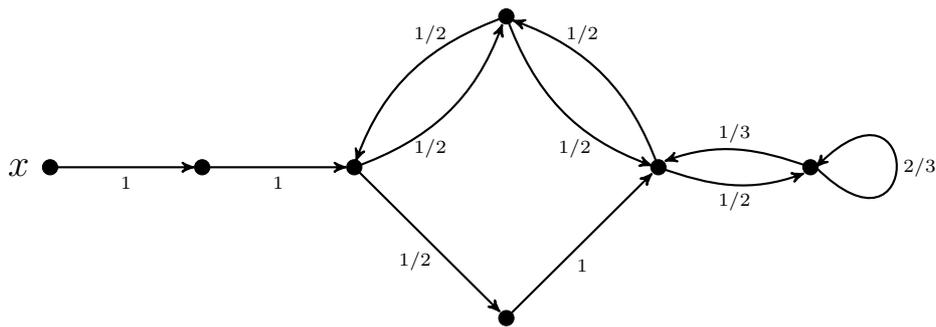
\begin{figure}
	
	\begin{center}
		
		\begin{tikzpicture}[label/.style={thick,circle}]
		\usetikzlibrary{arrows.meta}
		\usetikzlibrary{decorations.markings}
		\usetikzlibrary{decorations.pathreplacing}
		\tikzset{->-/.style={decoration={
					markings,
					mark=at position .5 with {Stealth[length=4mm]}},postaction={decorate}},>=stealth'}
	 
	    \draw[fill] (0,0) circle (.1);
		\draw (-.15,0) node[anchor=east]{{ \Large  $x$}};
		\draw[->,thick] (0 ,0) to (1.9,0);
		\draw (1,0) node[anchor=north]{{\scriptsize$1$}};
		
		\draw[fill] (2,0) circle (.1);
			\draw[->,thick] (2,0) to   (3.9,0);
		\draw (3,0) node[anchor=north]{{\scriptsize$  1$}};
		\draw[fill] (4,0) circle (.1);
			\draw[->,thick] (4,0) to[out=20,in=250 ]  (5.95,1.90);
		\draw (5,.5) node[anchor=north]{{\scriptsize$  1/2$}};
			\draw[->,thick] (4,0) to   (5.92,-1.92);
		\draw (4.8,-1) node[anchor=north]{{\scriptsize$  1/2$}};
		
		\draw[fill] (6,2) circle (.1);
				\draw[->,thick] (6,2) to[out=200,in=70 ]   (4.02,.08);
		\draw (5,2) node[anchor=north]{{\scriptsize$  1/2$}};
			\draw[->,thick] (8,0) to[out=110,in=-20 ]  (6.08,1.95);
		\draw (7,2) node[anchor=north]{{\scriptsize$  1/2$}};
		
		\draw[fill] (6,-2) circle (.1);
			\draw[->,thick] (6,-2) to   (7.92,-.08);
		\draw (7,-1.1) node[anchor=north]{{\scriptsize$  1$}};
	
		\draw[fill] (8,0) circle (.1);
		\draw[->,thick] (8,0) to[out=-20,in=200 ]  (9.92,-.08);
	\draw (9,-.2) node[anchor=north]{{\scriptsize$  1/2$}};
		\draw[<-,thick] (7.92,0.02) to[out=160,in=290 ]  (6,2);
	\draw (6.9,.5) node[anchor=north]{{\scriptsize$  1/2$}};
		\draw[fill] (10,0) circle (.1);
				\draw[->,thick] (10,0) to[out=160,in=20 ]  (8.08,.08);
		\draw (9,.2) node[anchor=south]{{\scriptsize$  1/3$}};
		\Loop[dist=2cm,dir=EA,style={thick,->}](10.07,0)
		\draw (11.1,0) node[anchor=west]{{\scriptsize$  2/3$}};

		\end{tikzpicture}
	\end{center}
	\caption{Example of a non-irreducible Markov chain on seven states where the cover time from $x$ is finite and from any other vertex the cover time is unbounded/undefined. }\label{fig:example}
\end{figure}

For a concrete example of why rational transition probabilities are necessary in Theorem \ref{covrat}, if one fixes any real number $r\geq 1$ then the two state chain with transition matrix given by
\begin{equation}\label{eq:matrix}\mathbf{P}= \begin{pmatrix}
1-1/r & 1/r\\
1/r & 1-1/r
\end{pmatrix}, \end{equation}has cover time $r$. 
It is well known, see for example \cite[Lemma 1.13]{LevinPeres}, that the cover time of finite irreducible Markov chain from any start vertex is bounded. 
This fact, and restricting the example given by \eqref{eq:matrix} to $r\in\mathbb{Q}$, implies the following corollary to Theorem \ref{covrat}. 
\begin{corollary} The set of cover times attainable by finite discrete-time irreducible rational Markov chains is $(\mathbb{Q}\cap [1,\infty) )\cup \{0 \}. $ 
\end{corollary}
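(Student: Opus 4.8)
The plan is to prove the claimed set equality by establishing the two inclusions separately. For one inclusion I would take an arbitrary finite irreducible rational Markov chain together with a start vertex $x$, and show its cover time $\exu{x}{\tau_{\mathsf{cov}}}$ lies in $(\mathbb{Q}\cap[1,\infty))\cup\{0\}$. By the boundedness fact cited above (\cite[Lemma 1.13]{LevinPeres}), finiteness and irreducibility of the chain guarantee $\exu{x}{\tau_{\mathsf{cov}}}<\infty$, so Theorem \ref{covrat} applies and gives $\exu{x}{\tau_{\mathsf{cov}}}\in\mathbb{Q}$. It then remains only to pin down the range: if $|\Omega|=1$ the chain already covers $\Omega$ at time $0$, so the cover time equals $0$; whereas if $|\Omega|\geq 2$ then covering $\Omega$ requires visiting at least one state other than $X_0$, which forces $\tau_{\mathsf{cov}}\geq 1$ surely and hence $\exu{x}{\tau_{\mathsf{cov}}}\geq 1$. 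This yields the containment in $(\mathbb{Q}\cap[1,\infty))\cup\{0\}$.

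For the reverse inclusion I would exhibit, for each value in the target set, an explicit chain realising it. The value $0$ is attained by the trivial one-state chain, which is vacuously irreducible and rational. For any rational $r\geq 1$ I would use the two-state chain of \eqref{eq:matrix}: since $r\in\mathbb{Q}\setminus\{0\}$ its reciprocal $1/r$ is rational, so the entries $1/r$ and $1-1/r$ are rational, and since $r\geq 1$ we have $1/r\in(0,1]$, making both off-diagonal entries positive and the chain irreducible. As noted in the excerpt this chain has cover time exactly $r$ (from either state the cover time is the hitting time of the other state, which is geometric with mean $r$), so every element of $\mathbb{Q}\cap[1,\infty)$ is attained. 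Combining the two inclusions gives the stated equality.

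I expect the only delicate points to be the boundary and degenerate cases rather than any real difficulty, since the substance is carried by Theorem \ref{covrat} and the example \eqref{eq:matrix}. In particular I would check that the endpoint $r=1$ still yields an irreducible chain (there $\mathbf P$ is the deterministic swap $\left(\begin{smallmatrix}0&1\\1&0\end{smallmatrix}\right)$, whose cover time is $1$), that the one-state chain is legitimately admitted on the irreducible side, and that the bound $\exu{x}{\tau_{\mathsf{cov}}}\geq 1$ for $|\Omega|\geq 2$ is established at the level of the stopping time $\tau_{\mathsf{cov}}$ before taking expectations. None of these is a genuine obstacle; the corollary is essentially a repackaging of Theorem \ref{covrat} together with \eqref{eq:matrix}.
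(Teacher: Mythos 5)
Your proof is correct and follows essentially the same route the paper intends: the forward inclusion via \cite[Lemma 1.13]{LevinPeres} and Theorem \ref{covrat}, and the reverse inclusion via the two-state chain \eqref{eq:matrix} restricted to rational $r\geq 1$ together with the trivial one-state chain for $0$. You also carefully fill in the boundary details (the lower bound $\tau_{\mathsf{cov}}\geq 1$ when $|\Omega|\geq 2$, and the $r=1$ case) that the paper leaves implicit, which is a welcome tightening rather than a different approach.
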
  
 
We now introduce multiple Markov chains, which have been studied for their applications to parallelising algorithms driven by random walks, see \cite{MultiAlon} and subsequent papers citing it. For any $ k \geq 1$, let $\mathbf{X}_t=\bigl(X_t^{(1)},\dots, X_t^{(k)} \bigr)$ be the $k$-\textit{multiple} of a Markov chain $\mathbf{P}$ where each $X_{t}^{(i)}$ is an independent copy of the chain  $\mathbf{P}$ run simultaneously on the same state space $\Omega$. The $k$-multiple of $\mathbf{P}$ is itself a Markov chain (with transition matrix $\mathbf{K}$) on $\Omega^k$ with transition probabilities 
\[\mathbf{K}(\mathbf{x},\mathbf{y}) = \prod_{j=1}^k\mathbf{P}(x^{(j)},y^{(j)}),\qquad\qquad\text{for all }\mathbf{x},\mathbf{y}\in\Omega^k.\] As before, we denote the conditional expectation $\Exu{(x^{(1)}, \dots, x^{(k)})}{\cdot } := \Ex{\cdot  \;\big| \mathbf{X}_0= (x^{(1)}, \dots, x^{(k)})}$,  where $X_{0}^{(i)} =x^{(i)}\in \Omega$ is the start state of the $i$\textsuperscript{th} walk for each $1\leq i\leq k$. We let the stopping time $\tau_{\mathsf{cov}}^{(k)}=\inf\{t : \bigcup_{i=0}^t\{X_i^{(1)}, \dots , X_i^{(k)} \} = \Omega  \}$ be the first time every state in $\Omega$ (not $\Omega^k$) has been visited by some walk
$X_t^{(i)}$. We then let $\Exu{\mathbf{x}}{\tau_{\mathsf{cov}}^{(k)}}$ denote the $k$-walk stopping time from $\mathbf{x}\in \Omega^k$. Note that this is not simply the cover time of the chain $\mathbf{K}$. The multiple walk cover time can have subtle dependences on $k$ and the host underlying Markov chain, see \cite{MultiAlon}.

We show that Theorem \ref{covrat} also holds in the more general setting of $k$-multiple Markov chains.    
\begin{theorem}\label{multicovrat}Let $k\geq 1$ and $(\mathbf{X}_t)_{t\geq 0}$ be the $k$-multiple of a discrete-time rational Markov chain on a finite state space $\Omega$. Then for any $\mathbf{x}\in \Omega^k$ such that $\exu{\mathbf{x}}{\tau_{\mathsf{cov}}^{(k)}}<\infty$ we have $\exu{\mathbf{x}}{\tau_{\mathsf{cov}}^{(k)}}\in \mathbb{Q}$.
\end{theorem}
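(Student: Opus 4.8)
The plan is to realise the multiple cover time as the expected hitting time of an absorbing set in an enlarged chain that is still finite and still rational, and then to show such hitting times are rational by solving a linear system with rational coefficients. First I would augment the $k$-multiple chain by recording, alongside the current position vector, the set of states seen so far. Concretely, set $S_t = \bigcup_{i=0}^{t}\{X_i^{(1)},\dots,X_i^{(k)}\}$ and consider the process $\mathbf{Y}_t = (\mathbf{X}_t, S_t)$ on the finite state space $\Omega^k \times 2^{\Omega}$. Since $S_{t+1} = S_t \cup \{X_{t+1}^{(1)},\dots,X_{t+1}^{(k)}\}$ is a deterministic function of $S_t$ and $\mathbf{X}_{t+1}$, the process $(\mathbf{Y}_t)_{t\ge 0}$ is a Markov chain whose one-step transitions are
\[ (\mathbf{x},S) \longmapsto \bigl(\mathbf{y},\, S\cup\{y^{(1)},\dots,y^{(k)}\}\bigr) \quad\text{with probability } \mathbf{K}(\mathbf{x},\mathbf{y}) = \prod_{j=1}^{k}\mathbf{P}(x^{(j)},y^{(j)}). \]
In particular these probabilities are products of rationals, hence rational. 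Writing $A = \{(\mathbf{y},\Omega) : \mathbf{y}\in\Omega^k\}$ for the configurations in which every state has been covered and $\mathbf{y}_0 = (\mathbf{x}, \{x^{(1)},\dots,x^{(k)}\})$ for the starting configuration, one has $\tau_{\mathsf{cov}}^{(k)} = \inf\{t : \mathbf{Y}_t \in A\}$, and therefore $\exu{\mathbf{x}}{\tau_{\mathsf{cov}}^{(k)}} = \exu{\mathbf{y}_0}{\tau_A}$, reducing the theorem to the rationality of a finite expected hitting time in a finite rational chain.

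The next step is to prove that claim about hitting times, taking care that $\mathbf{Y}$ need not be irreducible (indeed $A$ is absorbing, and the example of Figure \ref{fig:example} shows covering can fail from many states). Let $h(z) = \exu{z}{\tau_A}$ and let $F = \{z : h(z) < \infty\}$; note $A \subseteq F$ with $h \equiv 0$ on $A$, and by hypothesis $\mathbf{y}_0 \in F$. The key structural observation is that $F$ is forward-invariant: decomposing on the first step gives $h(z) = 1 + \sum_{w} \mathbf{P}_{\mathbf{Y}}(z,w)\, h(w)$ for $z \notin A$, and since every summand is non-negative, finiteness of $h(z)$ forces $h(w)<\infty$ whenever $\mathbf{P}_{\mathbf{Y}}(z,w) > 0$. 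Hence from any state of $F$ the chain remains in $F$, and I may restrict attention to the sub-chain on $F$.

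Finally I would solve for $h$ on $F$ by linear algebra. Let $\mathbf{Q}$ be the rational, substochastic matrix obtained by restricting the transitions of $\mathbf{Y}$ to $F\setminus A$. On $F$ the hitting time of $A$ is almost surely finite (finite expectation implies $\Pr{\tau_A<\infty}=1$), so $\Pru{z}{\tau_A > n} = \sum_{w\in F\setminus A}\mathbf{Q}^n(z,w) \to 0$, which for the finite non-negative matrix $\mathbf{Q}$ forces $\mathbf{Q}^n\to 0$ and hence makes $\mathbf{I} - \mathbf{Q}$ invertible (if $(\mathbf{I}-\mathbf{Q})v=0$ then $v=\mathbf{Q}^n v\to 0$). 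The restricted first-step identity then reads $(\mathbf{I}-\mathbf{Q})\,\mathbf{h} = \mathbf{1}$ on $F\setminus A$, whence $\mathbf{h} = (\mathbf{I}-\mathbf{Q})^{-1}\mathbf{1}$. Since $\mathbf{Q}$ has rational entries, Cramer's rule shows $(\mathbf{I}-\mathbf{Q})^{-1}$ has rational entries, so $\mathbf{h}$ is a rational vector; in particular $\exu{\mathbf{x}}{\tau_{\mathsf{cov}}^{(k)}} = h(\mathbf{y}_0) \in \mathbb{Q}$ (and when $\mathbf{y}_0 \in A$ the value is $0$, trivially rational). I expect the main obstacle to be precisely the possible lack of irreducibility: one cannot invert $\mathbf{I} - \mathbf{Q}$ over all of $\Omega^k\times 2^{\Omega}$, since states from which $A$ is unreachable carry infinite hitting times, so isolating the forward-invariant set $F$ is what makes the linear system well posed. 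The argument is uniform in $k\ge 1$, and the case $k=1$ recovers Theorem \ref{covrat}.
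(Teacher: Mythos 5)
Your proposal is correct, and its overall architecture is the same as the paper's: augment the $k$-multiple chain with the set of visited states, identify the cover time with the hitting time of the fully-covered layer $A$, and solve a rational linear system restricted to the states with finite hitting time. The differences lie in how the two supporting steps are proved, and in both cases your route is more self-contained. For the cover-time/hitting-time identity, the paper (Lemma \ref{multicovashit}) argues via a two-way coupling and stochastic domination ($\tau_C \preceq \tau_{\mathsf{cov}}$ and $\tau_{\mathsf{cov}} \preceq \tau_C$), whereas you observe that $\mathbf{Y}_t = (\mathbf{X}_t, S_t)$ is a deterministic functional of the trajectory of $\mathbf{X}$, so the identity $\tau_{\mathsf{cov}}^{(k)} = \tau_A$ holds pathwise and needs no separate argument; this is cleaner. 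For the rationality of hitting times, the substantive difference is in proving non-singularity of the linear system: the paper (Proposition \ref{hitrat+}) verifies that $\mathbf{I}-\mathbf{M}$ is weakly chained diagonally dominant and invokes the external result \cite[Lemma 3.2]{AziFor}, while you argue probabilistically that almost-sure finiteness of $\tau_A$ on $F$ gives $\Pru{z}{\tau_A > n} = \sum_{w}\mathbf{Q}^n(z,w) \to 0$, hence $\mathbf{Q}^n \to 0$ entrywise, hence $(\mathbf{I}-\mathbf{Q})v=0$ forces $v = \mathbf{Q}^n v \to 0$, i.e.\ $v=0$. Your argument rests on the forward-invariance of $F = \{z : \exu{z}{\tau_A}<\infty\}$, which you correctly extract from the first-step decomposition; this plays exactly the role of the paper's condition \ref{itm:ii}, which encodes the same reachability structure combinatorially (a chain of non-zero entries from every row to a strictly dominant row indexed by $S$). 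What your approach buys is a fully self-contained proof requiring no citation and no matrix-theoretic machinery; what the paper's buys is a reusable deterministic criterion that separates the linear algebra from the probability. Your use of Cramer's rule in place of the paper's Gaussian-elimination argument (Lemma \ref{Ratsol}) for rationality of $(\mathbf{I}-\mathbf{Q})^{-1}$ is an equivalent choice.
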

Theorem \ref{covrat} is the special case $k=1$ of Theorem \ref{multicovrat}, thus it suffices to prove Theorem \ref{multicovrat}.
\section{Proofs} 

In this section we shall prove Theorem \ref{multicovrat}. The first part of the proof (covered in Section \ref{sec:rat}) is to show the expected time to first visit any set of states (hitting time) in a rational Markov chain is rational. Then, in Section \ref{sec:couple}, we show for any $k\geq1$ and $\mathbf{P}$, the multiple walk with transition matrix $\mathbf{P}$ can be coupled with a higher dimensional Markov chain $\mathbf{Q}$ on a state space $V$ where $|V|\leq |\Omega|^k\cdot 2^{|\Omega|}$. The coupling shows that the first time all states in $\Omega$ have been visited by at least one of the $k$ walks has the same distribution as the first visit time a specific set $C\subset V$ is visited in $\mathbf{Q}$.

\subsection{Rationality of Hitting Times}\label{sec:rat}

For $S\subseteq \Omega$, a subset of the state space of a Markov chain $\mathbf{P}$, let the stopping time\[\tau_{S}:=\inf\left\{t \geq 0 : X_t \in S  \right\},\]be the first time $S$ is visited. If $S=\{s\}$ is a singleton set we abuse notation slightly by taking $\tau_s$ to mean $\tau_{\{s\}}$. For $x\in \Omega$, let $\Exu{x}{\tau_S}$ be the expected \textit{hitting time} of $S \subseteq \Omega$ for a chain started from $x$. The next result is the hitting time analogue of Theorem \ref{covrat}.

\begin{proposition}\label{hitrat+}Let $\mathbf{P}$ be a discrete-time rational Markov chain on a finite state space $\Omega$. For a non-empty set $S\subseteq \Omega$ let $B(S) = \left\{x \in \Omega \,: \, \Exu{x}{\tau_S} < \infty \right\} $.  Then for any $S\subseteq \Omega$ and $x \in B(S)$ we have $\exu{x}{ \tau_S }\in \mathbb{Q}.$
\end{proposition}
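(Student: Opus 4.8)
The plan is to reduce the computation of $\Exu{x}{\tau_S}$ to solving a finite linear system with rational coefficients whose coefficient matrix is invertible, so that Cramer's rule forces the solution to be rational. Write $h(x) := \Exu{x}{\tau_S}\in[0,\infty]$ for each $x\in\Omega$ and set $A := B(S)\setminus S$. First I would record the standard first-step analysis: for $x\in S$ we have $h(x)=0$, while for $x\notin S$ the Markov property gives, as an identity in $[0,\infty]$,
\[ h(x) = 1 + \sum_{y\in\Omega}\mathbf{P}(x,y)\,h(y). \]

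The structural observation that copes with the lack of irreducibility is that $B(S)$ is closed under positive transitions out of $A$. Indeed, if $x\in A$ then the left-hand side above is finite, so every nonnegative summand on the right is finite, and hence $\mathbf{P}(x,y)>0$ forces $h(y)<\infty$, i.e. $y\in B(S)=A\cup S$. Consequently a walk started in $A$ stays in $A$ until the instant it enters $S$, and its pre-absorption dynamics are governed by the substochastic matrix $\mathbf{P}_A$ obtained by restricting $\mathbf{P}$ to the rows and columns indexed by $A$. Using $h\equiv 0$ on $S$, the displayed equations restricted to $x\in A$ collapse to the finite linear system $(I-\mathbf{P}_A)\,h_A=\mathbf{1}$, where $h_A=(h(x))_{x\in A}$ is finite by the definition of $B(S)$ and $\mathbf{1}$ is the all-ones vector.

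It then remains to show $I-\mathbf{P}_A$ is invertible, which is exactly where the finiteness hypothesis enters. For $x\in A$ one has $\pru{x}{\tau_S>n}=(\mathbf{P}_A^{\,n}\mathbf{1})(x)$, since avoiding $S$ through time $n$ means remaining in $A$; summing the tail formula $\Exu{x}{\tau_S}=\sum_{n\ge0}\pru{x}{\tau_S>n}$ gives $h_A=\sum_{n\ge0}\mathbf{P}_A^{\,n}\mathbf{1}$. Finiteness of $h_A$ forces the terms $\mathbf{P}_A^{\,n}\mathbf{1}$ to vanish, whence every entry of $\mathbf{P}_A^{\,n}$ tends to $0$ and $\rho(\mathbf{P}_A)<1$, so $I-\mathbf{P}_A$ is nonsingular. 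Finally, because $\mathbf{P}$ is rational the matrix $I-\mathbf{P}_A$ has rational entries, so by Cramer's rule (its inverse being $\det(I-\mathbf{P}_A)^{-1}$ times the adjugate) the inverse is rational; thus $h_A=(I-\mathbf{P}_A)^{-1}\mathbf{1}\in\mathbb{Q}^{A}$, and in particular $\Exu{x}{\tau_S}=h(x)\in\mathbb{Q}$ for every $x\in B(S)$.

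I expect the main obstacle to lie in the two points where infinite hitting times and non-irreducibility must be handled rather than assumed away: first, the forward-closedness of $B(S)$, which guarantees that restricting to $A$ loses nothing and that the system involves only the finite values $(h(x))_{x\in A}$; and second, the invertibility of $I-\mathbf{P}_A$ without assuming the ambient chain is absorbing, for which the hypothesis $x\in B(S)$ is precisely what supplies $\rho(\mathbf{P}_A)<1$. Once both are secured, rationality is immediate from Cramer's rule.
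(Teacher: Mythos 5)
Your proposal is correct, and while it derives the same first-step linear system as the paper, it certifies nonsingularity of the coefficient matrix by a genuinely different argument. The paper keeps all states of $B(S)$ in the system (the rows indexed by $S$ becoming trivial), shows the matrix $\mathbf{A}=\mathbf{I}_b-\mathbf{M}$ is \emph{weakly chained diagonally dominant} --- every row is diagonally dominant, rows in $S$ strictly so, and finiteness of $\Exu{x}{\tau_S}$ yields a chain of nonzero entries from any row to such a strictly dominant row --- and then cites a lemma of Azimzadeh and Forsyth for nonsingularity, finishing with Gaussian elimination (rather than Cramer's rule) to preserve rationality. You instead delete $S$, pass to the substochastic block $\mathbf{P}_A$, and prove $\rho(\mathbf{P}_A)<1$ directly from the tail-sum identity $h_A=\sum_{n\geq 0}\mathbf{P}_A^{\,n}\mathbf{1}$: this is self-contained, needing no external matrix-theoretic result, and it makes the probabilistic role of the finiteness hypothesis transparent. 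Your explicit proof that $B(S)$ is forward-closed is also a point of care the paper leaves implicit: that closure is needed even for the paper's claim that $\mathbf{h}$ solves $\mathbf{A}\mathbf{h}=\mathbf{b}$, since otherwise the first-step sum could place positive mass on states with infinite hitting time. What the paper's route buys in exchange is brevity --- the chained-dominance check is a purely combinatorial verification once the cited lemma is granted --- and a reusable matrix criterion that avoids any analysis of powers or spectra of $\mathbf{P}_A$.
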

Observe that if $\mathbf{P}$ is irreducible then $B(S)=\Omega$ for any $S\subseteq \Omega$ by \cite[Lemma 1.13]{LevinPeres}. Before proving Proposition \ref{hitrat+} we give some definitions and prove an elementary lemma. 

For a field $\mathbf{F}$ and integers $n,m \geq 1$ let $\mathbf{F}^n$ and $\mathbf{F}^{m\times n}$ denote the set of $n$-dimensional vectors and $m\!\times\! n$-dimension matrices respectively. Let $\mathbf{I}_n$ denote the $n\!\times\! n$ identity matrix.

\begin{lemma}\label{Ratsol} Let $\mathbf{A}\in \mathbb{Q}^{n\times n}$ be non-singular and $\mathbf{b}\in \mathbb{Q}^n$. Then there exists a unique vector $\mathbf{x}\in \mathbb{Q}^n$ such that $\mathbf{A}\mathbf{x}=\mathbf{b}$.
\end{lemma}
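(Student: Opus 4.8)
The plan is to use Cramer's rule, exploiting the fact that the determinant of a rational matrix is rational and that non-singularity is a field-independent notion. First I would recall that a square matrix over a field is invertible precisely when its determinant is non-zero; since $\det$ is a polynomial in the matrix entries with integer coefficients, $\det(\mathbf{A})\in\mathbb{Q}$, and by hypothesis $\det(\mathbf{A})\neq 0$.

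For existence, I would invoke Cramer's rule. Writing $\mathbf{A}_i$ for the matrix obtained from $\mathbf{A}$ by replacing its $i$-th column with $\mathbf{b}$, the (real) solution has components $x_i=\det(\mathbf{A}_i)/\det(\mathbf{A})$. Each $\det(\mathbf{A}_i)$ is again rational, being the same integer-coefficient polynomial evaluated at rational entries, so each $x_i$ is a quotient of rationals with non-zero denominator, hence $x_i\in\mathbb{Q}$. This exhibits a rational vector $\mathbf{x}$ satisfying $\mathbf{A}\mathbf{x}=\mathbf{b}$.

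For uniqueness, I would note that since $\mathbf{A}$ is non-singular it is injective as a linear map on $\mathbb{R}^n\supseteq\mathbb{Q}^n$; thus any two rational solutions coincide, as they are a fortiori solutions in $\mathbb{R}^n$, where the solution is unique. Equivalently, one may argue entirely within the field $\mathbb{Q}$: Gaussian elimination applied to the augmented matrix $[\mathbf{A}\mid\mathbf{b}]$ uses only the four field operations, which keep every intermediate quantity in $\mathbb{Q}$, and (because $\mathbf{A}$ is non-singular) terminates with the unique rational solution.

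Since this is a standard fact of linear algebra over fields, I do not expect a substantive obstacle. The only point warranting care is to confirm that non-singularity — a priori a statement about $\mathbf{A}$ viewed over $\mathbb{R}$ — transfers to the rational setting; this is immediate because the determinant is computed identically over $\mathbb{Q}$ and $\mathbb{R}$, so $\mathbf{A}$ has the same rank and the same (trivial) kernel in both.
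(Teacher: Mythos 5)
Your proposal is correct, and your primary argument is genuinely different from the paper's. The paper's proof notes that since $\mathbf{A}$ is non-singular one can compute $\mathbf{A}^{-1}$ by Gaussian elimination; because all entries of $\mathbf{A}$ are rational and elimination involves only finitely many field operations, $\mathbf{A}^{-1}\in\mathbb{Q}^{n\times n}$, whence $\mathbf{x}=\mathbf{A}^{-1}\mathbf{b}\in\mathbb{Q}^n$ (uniqueness coming, as in your argument, from non-singularity over $\mathbb{R}$). You instead lead with Cramer's rule: each coordinate is $x_i=\det(\mathbf{A}_i)/\det(\mathbf{A})$, and since the determinant is an integer-coefficient polynomial in the (rational) entries, each $x_i$ is a ratio of rationals with non-zero denominator. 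The Cramer's rule route buys a closed-form expression in which rationality of every coordinate is immediate, with no appeal to the behaviour of an algorithm; in particular it sidesteps the point the paper glosses over, namely that every pivot arising during elimination is itself rational and non-zero so that the procedure runs to completion over $\mathbb{Q}$. The paper's route, conversely, avoids determinant theory entirely and is constructive in an algorithmic sense. Both arguments are instances of the same underlying fact — solving $\mathbf{A}\mathbf{x}=\mathbf{b}$ involves only field operations, and $\mathbb{Q}$ is a field — and indeed your ``equivalently'' aside on Gaussian elimination over the augmented matrix $[\mathbf{A}\mid\mathbf{b}]$ essentially reproduces the paper's proof, so the two treatments contain one another. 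Your closing remark that non-singularity transfers between $\mathbb{Q}$ and $\mathbb{R}$ because the determinant is computed identically in both fields is a worthwhile precision that the paper leaves implicit.
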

\begin{proof}Since $\mathbf{A} $ is non-singular there exists a unique solution $\mathbf{x}\in \mathbb{R}^n$ to the linear system given by $\mathbf{A}\mathbf{x}=\mathbf{b}$. Also, again since $\mathbf{A} $ is non-singular, we can compute $\mathbf{A}^{-1}$ by Gaussian elimination. Since all entries of $\mathbf{A}$ are rational, all multiplications preformed during the Gaussian elimination will be rational. Thus, as there are only finitely many row additions and multiplications, $\mathbf{A}^{-1}\in \mathbb{Q}^{n\times n}$. Since $\mathbf{b}\in \mathbb{Q}^{n} $, we conclude that $\mathbf{x}  =\mathbf{A}^{-1}\mathbf{b}\in \mathbb{Q}^{n}$.
\end{proof}
  We now use this lemma to prove Proposition \ref{hitrat+}.

\begin{proof}[Proof of Proposition \ref{hitrat+}]Observe that $B(S)\neq \emptyset$ since $S\subseteq B(S)$ and $\Exu{s}{\tau_S}=0$ for all $s\in S$. Let $b:=|B(S)|$. Now, each entry of the vector $\mathbf{h}:=\left( \exu{x}{\tau_S}\right)_{x \in B(S)}$ is bounded and $\mathbf{h}$ is a solution to the following set of linear equations
	\[\exu{x}{\tau_S}=\begin{cases}1+\sum_y \mathbf{P}(x,y)\cdot \exu{y}{\tau_S}&\quad\text{if }x\not\in S\\
	0&\quad\text{if }x\in S.\end{cases}\] This can be expressed as $ \mathbf A \mathbf h=\mathbf b$ where $\mathbf b\in \{0,1 \}^b$ and $\mathbf A :=\left(\mathbf{I}_b -\mathbf{M}\right) \in \mathbb{Q}^{b\times b}$ for $\mathbf{M}\in \mathbb{Q}^{b\times b}$ given by $\mathbf{M}(i,j) =\mathbf{P}(i,j) $ if $i,j\notin S  $ and $0$ otherwise. We shall show that 	
		\begin{enumerate} \item \label{itm:i} all rows $i$ satisfy $|\mathbf{A}(i,i)|\geq \sum_{j\neq i } |\mathbf{A}(i,j)|$, and
		\item\label{itm:ii} for each row $r_0$, there exists a finite sequence of rows $r_0,r_1 \dots, r_t$ such that $\mathbf{A}(r_{i-1},r_{i})\neq 0 $ for all $1\leq i\leq t$ and  $|\mathbf{A}(r_t,r_t)|> \sum_{j\neq r_t} |\mathbf{A}(r_t,j)|$.  
	\end{enumerate}
Observe that Condition \ref{itm:i} holds since $\mathbf{M}$ is a sub-matrix of $\mathbf{P}$. 

For Condition \ref{itm:ii}, note that for every row $s\in S$ we have $\sum_{j} \mathbf{M}(s,j) =0$. Thus $|\mathbf{A}(s,s)|> \sum_{j\neq s } |\mathbf{A}(s,j)|$ for any row $s \in S$. The fact that each row $r_0$ corresponds to a state in $B(S)$ implies that, for any row $r_0$, there exists some $r_t\in S$ and a sequence of states/rows $r_0,r_1 \dots, r_t$ such that $\mathbf{A}(r_{i-1},r_{i}) = -\mathbf{P}(r_{i-1},r_{i})\neq 0 $, thus Condition \ref{itm:ii} is satisfied.

 Since $\mathbf{A}$ satisfies \ref{itm:i} and \ref{itm:ii} it is weakly chained diagonally dominant, thus by \cite[Lemma 3.2]{AziFor} $\mathbf{A}$ is non-singular. Thus, by Lemma \ref{Ratsol}, $\mathbf{h}\in\mathbb{Q}^{b}$.
\end{proof}

\subsection{Encoding Cover Times as Hitting Times} \label{sec:couple}

Let $\mathbf{P}$ be a Markov chain on a state space $\Omega$ with transition matrix  $ \mathbf{P}=(\mathbf{P}(x,y))_{x,y\in \Omega}$ and $\mathcal{P}(\Omega)=\{S\subseteq \Omega \} $ be the power-set of $\Omega$. For $k\geq 1$ independent walks with transition matrix $\mathbf{P}$ on the same state space $\Omega$ we define the \textit{$k$-walk auxiliary chain} $\mathbf{Q}:=\mathbf{Q}(\mathbf{P},k)$ to be the Markov Chain on state space $V:=V(\Omega,k)$ given by \[V=\big\{((x_1, \dots, x_k) , S) : S\subseteq \Omega,\; x_i\in S\text{ for all }1\leq  i\leq k \big\}\subseteq \Omega^k\times \mathcal{P}(\Omega),\] with transition matrix specified by  \begin{equation}\label{eq:multitransM}\mathbf{Q}\left((\mathbf{x},S),(\mathbf{y},S\cup\{y^{(1)}, \dots, y^{(k)}\})\right)= \mathbf{P}(x^{(1)},y^{(1)})\cdots \mathbf{P}(x^{(k)},y^{(k)}), \end{equation} for any $S\subseteq \Omega$ and  $\mathbf{x},\mathbf{y}\in \Omega^k$ where $\mathbf{x}=(x^{(1)}, \dots, x^{(k)})$ and $\mathbf{y}=(y^{(1)}, \dots, y^{(k)})$.

Figure \ref{fig:aux} shows an example of the auxiliary chain $\mathbf{Q}$ of a single Markov chain $\mathbf{P}$ on three states, that is the case $k=1$. Staying within the confines of $k=1$ case for simplicity, one may think of $\mathbf{Q}$ as inducing a directed graph consisting of many `layers', where each layer is a copy of $\mathbf{P}$ restricted to a subset of $ \Omega$. These layers are linked by directed edges which are crossed when a new state not in the current layer is first visited. Thus, since a sequence $x_0,x_1, \dots$ in the first component of $V$ evolves according to $\mathbf{P}$ by \eqref{eq:multitransM}, each layer encodes which states of chain have been visited so far by a trajectory in $\mathbf{P}$.

\begin{figure}
	
\begin{center}
	
	\begin{tikzpicture}[label/.style={thick,circle}]
	\usetikzlibrary{arrows.meta}
	\usetikzlibrary{decorations.markings}
	\usetikzlibrary{decorations.pathreplacing}
	\tikzset{->-/.style={decoration={
				markings,
				mark=at position .5 with {Stealth[length=4mm]}},postaction={decorate}},>=stealth'}
	
	
	\draw (0 ,0) node {{$\color{blue}(1,\{1\})$}};
	\draw[->] (.6,0 ) -- (1.75 ,0) ;
	\draw (1.2 ,0) node[anchor=north]{{\scriptsize$1$}};
	
	\draw (0 ,3) node {{$\color{blue}(2,\{2\})$}};
	\draw[->] (.6,3 ) -- (1.75 ,4) ;
	\draw (1.2 ,3.4) node[anchor=north]{{\scriptsize$1/2$}};
	\draw[->] (.6,3 ) -- (1.75 ,2) ;
	\draw (1.2 ,2.4) node[anchor=north]{{\scriptsize$1/2$}};
	
	\draw (0 ,7) node {{$\color{blue}(3,\{3\})$}};
	\draw[->] (.6,7 ) -- (1.75 ,8) ;
	\draw (1.2 ,7.4) node[anchor=north]{{\scriptsize$1/2$}};
	\draw[->] (.6,7 ) -- (1.75 ,6) ;
	\draw (1.2 ,6.4) node[anchor=north]{{\scriptsize$1/2$}};

	\draw (2.5 ,0) node {{$\color{blue}(2,\{1,2\})$}};
	\draw[->] (2.45 ,.25) to[out=110,in=250 ] (2.45 ,1.75) ;
	\draw (2 ,1.2) node[anchor=north]{{\scriptsize$1/2$}};
	\draw[->] (2.55 ,1.75) to[out=290,in=70 ]  (2.55 ,.25) ;
	\draw (3 ,1.2) node[anchor=north]{{\scriptsize$1$}};
	\draw[->] (3.25 ,0) -- (6.1 ,2.9) ;
	\draw (4.5 ,2) node[anchor=north]{{\scriptsize$1/2$}};

	\draw (2.5 ,2) node {{$\color{blue}(1,\{1,2\})$}};
	
	\draw (2.5 ,4) node {{$\color{blue}(3,\{2,3\})$}};
	\draw[->] (2.45 ,4.25) to[out=110,in=250 ] (2.45 ,5.75) ;
	\draw (2 ,5.2) node[anchor=north]{{\scriptsize$1/2$}};
	\draw[->] (2.55 ,5.75) to[out=290,in=70 ]  (2.55 ,4.25) ;
	\draw (3 ,5.2) node[anchor=north]{{\scriptsize$1/2$}};
	\draw[->] (3.25,4.1 ) -- (4.6 ,4.9) ;
	\draw (4 ,4.5) node[anchor=north]{{\scriptsize$1/2$}}; 
	\draw[->] (3.25,5.9 ) -- (4.6 ,5.1) ;
	\draw (4 ,6 ) node[anchor=north]{{\scriptsize$1/2$}};
	
	\draw (2.5 ,6) node {{$\color{blue}(2,\{2,3\})$}};
	
	\draw (2.5 ,8) node {{$\color{blue}(1,\{1,3\})$}};
	\draw[->] (3.25 ,8) -- (6.1 ,7.1) ;
	\draw (4.5 ,7.6) node[anchor=north]{{\scriptsize$1$}}; 
	
	\draw (7 ,3) node {{$\color{blue}(3,\{1,2,3\})$}};
	\draw[->] (7.7 ,3.3) to[out=80,in=280 ] (7.7 ,6.7) ;
	\draw (8.2 ,5.3) node[anchor=north]{{\scriptsize$1/2$}}; 
	\draw[->] (7.5 ,6.7)  to[out=260,in=100 ] (7.5 ,3.3) ;
	\draw (7.05 ,5.3) node[anchor=north]{{\scriptsize$1/2$}}; 
	\draw[->] (6.5 ,3.3) -- (5.5 ,4.7) ;
	\draw (5.8 ,4) node[anchor=north]{{\scriptsize$1/2$}}; 
	
	\draw (5.5 ,5) node {{$\color{blue}(1,\{1,2,3\})$}};
	
	\draw (7 ,7) node {{$\color{blue}(2,\{1,2,3\})$}};
	\draw[->] (6.5 ,6.7) to[out=220,in=70 ] (5.5 ,5.3) ;
	\draw (5.7 ,6.5) node[anchor=north]{{\scriptsize$1/2$}}; 
	\draw[->] (5.7 ,5.3) to[out=50,in=240 ](6.7 ,6.7)  ;
	\draw (6.3 ,6) node[anchor=north]{{\scriptsize$1$}};
	\draw (6.7 ,4.8) node[anchor=north]{{\huge\color{red} $C$}};
	\draw[fill=red!20,opacity=0.4] (6.62,5) ellipse (2.05cm and 2.8cm);

	\draw[fill] (8,0) circle (.1);
	\draw (8 ,-.1) node[anchor=north]{{ $1$}};
	\draw[fill] (11,0) circle (.1);
	\draw (11 ,-.1) node[anchor=north]{{$3$}};
	\draw[fill] (9.5,2.6) circle (.1);
	\draw (9.5 ,2.7) node[anchor=south]{{$2$}};
	
	\draw[->] (8 ,0) to[out=85,in=215 ]  (9.4,2.54);
	\draw (8.2 ,2) node[anchor=north]{{\scriptsize$1$}};
	\draw[->] (9.5 ,2.6) to[out=265,in=35 ]  (8.08,.09);
	\draw (8.85,1.6) node[anchor=north]{{\scriptsize$  1/2$}};
	
	\draw[->] (11 ,0) to[out=95,in=325 ]  (9.6,2.54);
	\draw (10.8 ,2) node[anchor=north]{{\scriptsize$1/2$}};
	\draw[->] (9.5 ,2.6) to[out=275,in=155 ]  (10.92,.09);
	\draw (10.12,1.6) node[anchor=north]{{\scriptsize$  1/2$}};
	
	\draw[->] (11,0.0) -- (8.13 ,0) ;
	\draw (9.5 ,0) node[anchor=north]{{\scriptsize$1/2$}};
	\end{tikzpicture}
\end{center}
	\caption{This figure shows an example of a Markov chain $\mathbf{P}$ on three states (bottom right) and its associated auxiliary chain $\mathbf{Q}(\mathbf{P},1)$, where the set $C$ from Lemma \ref{multicovashit} is shown in the red shaded ellipse.}\label{fig:aux}
\end{figure}
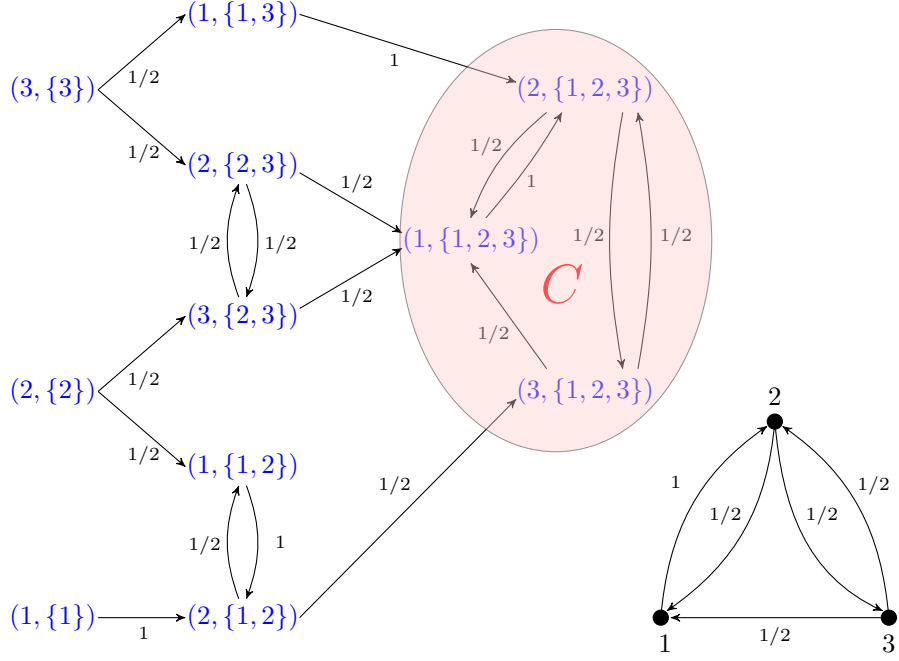

Similar constructions to $\mathbf{Q}(\mathbf{P},1)$ were used by the author and co-authors in the study of the Choice and $\varepsilon$-TB random walks, which are walks where a controller can influence which vertices are visited. In particular they were used to show that there exist optimal strategies for covering a graph by these walks which are time invariant in a certain sense \cite{POTC} and to show the computational problem of finding optimal strategies to cover a graph by these walks is in $\mathsf{PSPACE}$ \cite{TBRW}.

The next result equates the cover time by $k\geq 1$ multiple Markov chain with transition matrix $\mathbf{P}$ to the hitting time of a specific set in the auxiliary chain $\mathbf{Q}(\mathbf{P},k)$. For clarity we use the notation $\Exuc{\cdot}{\mathbf{P}}{\cdot} $ to highlight the chain, in this case $\mathbf{P}$, in which the expectation is taken.

\begin{lemma}\label{multicovashit}Let $\mathbf{P}$ be a Markov chain on $\Omega$, and let $k\geq 1$ be an integer. Let $\mathbf{Q}:=\mathbf{Q}(\mathbf{P},k)$ be the associated $k$-walk auxiliary chain with state space $V:=V(\Omega,k)$, and set $C = \{( \mathbf{u},\Omega) : \mathbf{u} \in \Omega^k \}\subset W .$ Then, for any $\mathbf{x}=(x^{(1)}, \dots, x^{(k)})\in \Omega^k$ and real number $a$, we have \[\Pruc{\mathbf{x}}{\mathbf{P}}{\tau_{\mathsf{cov}}^{(k)}\geq a}  = \Pruc{(\mathbf{x},\{x^{(1)}, \dots, x^{(k)}\})}{\mathbf{Q}}{\tau_{C}\geq a }.\] Consequently, $\Exuc{\mathbf{x}}{\mathbf{P}}{\tau_{\mathsf{cov}}^{(k)}}  = \Exuc{(\mathbf{x},\{x^{(1)}, \dots, x^{(k)}\})}{\mathbf{Q}}{\tau_{C}}$, for any $\mathbf{x} \in \Omega^k$.
\end{lemma}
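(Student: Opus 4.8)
The plan is to construct an explicit coupling between the $k$-multiple walk $\mathbf{X}_t$ under $\mathbf{P}$ and the auxiliary chain $\mathbf{Q}$, run on the enlarged state space $V$, so that the two processes' trajectories are driven by the same randomness and the hitting time of $C$ exactly records the moment the multiple walk finishes covering $\Omega$. First I would set up the coupling: start the $\mathbf{Q}$-chain at $(\mathbf{x},\{x^{(1)},\dots,x^{(k)}\})$ and define $(\mathbf{X}_t, S_t)$ to be the $\mathbf{Q}$-valued process whose first component $\mathbf{X}_t$ is a faithful copy of the $k$-multiple walk and whose second component is updated deterministically by $S_{t+1}=S_t\cup\{X_{t+1}^{(1)},\dots,X_{t+1}^{(k)}\}$. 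The defining equation \eqref{eq:multitransM} of $\mathbf{Q}$ is built so that this is exactly the $\mathbf{Q}$-transition, since $\mathbf{Q}((\mathbf{x},S),(\mathbf{y},S\cup\{y^{(1)},\dots,y^{(k)}\}))=\prod_j \mathbf{P}(x^{(j)},y^{(j)})$ coincides with the $k$-multiple transition probability from $\mathbf{x}$ to $\mathbf{y}$ for each fixed source $(\mathbf{x},S)$.

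Next I would verify the induction hypothesis that makes the coupling meaningful, namely that under this joint construction $S_t=\bigcup_{i=0}^t\{X_i^{(1)},\dots,X_i^{(k)}\}$ for every $t\geq 0$, so that the second coordinate always equals the set of states visited so far by all $k$ walks. This holds at $t=0$ by the choice of starting state, and the inductive step is immediate from the update rule $S_{t+1}=S_t\cup\{X_{t+1}^{(j)}\}_j$ together with the hypothesis. Granting this, the key identity is that the auxiliary chain reaches $C$ precisely when $S_t=\Omega$: we have $(\mathbf{X}_t,S_t)\in C$ if and only if $S_t=\Omega$, which by the induction hypothesis is equivalent to $\bigcup_{i=0}^t\{X_i^{(1)},\dots,X_i^{(k)}\}=\Omega$, i.e.\ to $\tau_{\mathsf{cov}}^{(k)}\leq t$. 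Hence the first time $\tau_C$ that the $\mathbf{Q}$-chain enters $C$ equals $\tau_{\mathsf{cov}}^{(k)}$ as computed along the coupled trajectory, pathwise.

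Because the coupling identifies the two stopping times path-by-path and the marginal law of $\mathbf{X}_t$ under the coupling is exactly the law of the $k$-multiple walk started at $\mathbf{x}$, the distributional identity $\Pruc{\mathbf{x}}{\mathbf{P}}{\tau_{\mathsf{cov}}^{(k)}\geq a}=\Pruc{(\mathbf{x},\{x^{(1)},\dots,x^{(k)}\})}{\mathbf{Q}}{\tau_C\geq a}$ follows for every real $a$. The consequence for expectations then comes from the tail-sum formula $\Ex{T}=\sum_{a\geq 1}\Pr{T\geq a}$ applied to the two equal tails (both processes being $\mathbb{N}$-valued), term by term.

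The main obstacle I anticipate is purely bookkeeping rather than conceptual: one must check that the map from $\mathbf{P}$-randomness to $\mathbf{Q}$-trajectories is well defined and measure-preserving, in particular that no probability mass is lost by the deterministic set-update (the second coordinate is a function of the first, so the $\mathbf{Q}$-transition out of any state $(\mathbf{x},S)$ is supported only on targets of the form $(\mathbf{y},S\cup\{y^{(j)}\}_j)$ and has total mass $\sum_{\mathbf{y}}\prod_j\mathbf{P}(x^{(j)},y^{(j)})=1$). A secondary point worth stating explicitly is that $C\subseteq V$ really does consist of valid states, i.e.\ that $((x_1,\dots,x_k),\Omega)$ satisfies the membership constraint $x_i\in\Omega$ defining $V$, which is trivially true; this resolves the apparent typo where the statement writes $C\subset W$ rather than $C\subset V$.
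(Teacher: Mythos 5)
Your proof is correct and takes essentially the same approach as the paper: the identical coupling in which the second coordinate deterministically accumulates the set of visited states, measure preservation checked via \eqref{eq:multitransM}, and the identification of $\tau_C$ with $\tau_{\mathsf{cov}}^{(k)}$ along coupled trajectories. The paper phrases the final step as two stochastic-domination inequalities ($\tau_C \preceq \tau_{\mathsf{cov}}^{(k)}$ and $\tau_{\mathsf{cov}}^{(k)} \preceq \tau_C$), but this amounts to the same pathwise equality that you establish directly by induction on $S_t$.
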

We must introduce some notation before proving Lemma \ref{multicovashit}. For real valued random variables $X,Y$ we say that $Y$ \textit{stochastically dominates} $X$ if $ \Pr{ Y \geq a } \geq   \Pr{ X \geq a}$ for all real $a$, and we denote this by $X\preceq Y$. Thus, if $X\preceq Y$ and $Y\preceq X$, then $X$ and $Y$ are equidistributed. 

\begin{proof}[Proof of Lemma \ref{multicovashit}]
We first show how any trajectory $(\mathbf{X}_t)_{t\geq 0}$ of a $k$-multiple of the Markov chain $\mathbf{P}$ can be coupled with a trajectory $(Y_{t})_{t\geq 0}$ of the auxiliary Markov chain $\mathbf{Q}(\mathbf{P},k) $ given by \eqref{eq:multitransM}. To begin, given any start vector $\mathbf{X}_0 =\mathbf{x}_0  \in \Omega^k$, where $\mathbf{x}_0 =(x_0^{(1)}, \dots, x_0^{(k)} )$, we set $Y_0= (\mathbf{x}_0,\{x_0^{(1)}, \dots, x_0^{(k)} \}) \in V$. Then, given a trajectory $(\mathbf{X}_{t})_{t=0}^T= ( \mathbf{x}_t)_{t=0}^T$ we set $Y_{t}= \left(\mathbf{x}_t, \bigcup_{i=0}^t\bigcup_{j=1}^k\left\{x_i^{(j)} \right\}\right) $ for each $0\leq t\leq  T$. Now by \eqref{eq:multitransM}, 
	\begin{equation}\label{eq:trajprob2}\begin{aligned}\prod_{i=0}^{t-1}\prod_{j=1}^{k}\mathbf{P}(x_{i}^{(j)},x_{i+1}^{(j)}) &= \mathbf{Q}\left(\left(\mathbf{x}_0,\bigcup_{j=1}^{k}\left\{x_0^{(j)}\right\}\right),\left(\mathbf{x}_1,\bigcup_{i=0}^1\bigcup_{j=1}^{k}\left\{x_i^{(j)}\right\}\right)\right) \cdots\\ &\qquad\cdot\mathbf{Q}\left(\left(\mathbf{x}_{t-1},\bigcup_{i=0}^{t-1}\bigcup_{j=1}^{k}\left\{x_i^{(j)}\right\}\right),\left(\mathbf{x}_{t},\bigcup_{i=0}^{t}\bigcup_{j=1}^{k}\left\{x_i^{(j)}\right\}\right)\right). \end{aligned}\end{equation}
	Thus given any trajectory $(\mathbf{X}_t)_{t\geq 0}$ of $\mathbf{P}$ we can find a trajectory $(Y_t)_{t\geq 0}$ of $\mathbf{Q}(\mathbf{P},k)$ with the same measure. To couple a given trajectory $(Y_t)_{t\geq 0}$ of $\mathbf{Q}(\mathbf{P},k)$ to a trajectory $(\mathbf{X}_t)_{t\geq 0}$ of $\mathbf{P}$ is even simpler; given $\textbf{Y}_t=(\textbf{y}_t, S_t)$ we simply `forget' the second component of $\textbf{Y}_t$ and set $\textbf{X}_t= \textbf{y}_t \in \Omega^k$ for each $t\geq 0$. Again the measure is preserved by \eqref{eq:trajprob2}. 
	
	Recall the set $C = \{( \mathbf{u},\Omega)\, : \,\mathbf{u} \in \Omega^k \}$ which is a subset of the state space $V(\Omega, k)$ of the auxiliary chain $\mathbf{Q}(\mathbf{P}, k)$. To complete the proof we show that, for any $\mathbf{x} \in \Omega^k$, the times $\tau_{\mathsf{cov}}$ and $\tau_{C}$  in the coupled chains $\mathbf{X}_t$ and $\mathbf{Y}_t$, started from $\mathbf{x} $ and $(\mathbf{x},\{x^{1},\dots, x^{(k)} \})$ respectively, are equidistributed.

	Suppose we take any trajectory $(\mathbf{X}_{t})_{t=0}^{T}$ of length $T\geq 0$ such that $\cup_{i=0}^T\cup_{j=1}^k\{X_i^{(j)} \} =\Omega$. Then by the coupling above, we have  $\mathbf{Y}_T = \left(\mathbf{x}_T,\cup_{i=0}^T\cup_{j=1}^k\{X_i^{(j)} \}\right) = \left(\mathbf{x}_T,\Omega \right)\in C$. Since this holds for any trajectory and any time $T$ such that $\cup_{j=0}^T\{X_j \} =\Omega$, we can assume that $T$ is the first such time. That is, we can take $T=\tau_{\mathsf{cov}}$ and then it follows that $ \tau_C \preceq \tau_{\mathsf{cov}}$. 
	
	Conversely, let $(Y_t)_{t= 0}^T $ be any trajectory in $\mathbf{Q}$ where $Y_0= (\mathbf{y}_0,\cup_{j=1}^k\{y_0^{(j)}\})$, for some $\mathbf{y}_0 = (y_0^{(1)}, \dots,y_0^{(k)} ) \in \Omega^k$ and $Y_T \in C$. Since the only transitions supported by $\mathbf{Q}$ are from $(\mathbf{y},S )$ to $\left(\mathbf{z},S\cup\left(\cup_{j=1}^k \{z^{(j)}\} \right)\right)$ where $\prod_{j=1}^k\mathbf{P}(y^{(j)},z^{(j)})>0$, and $\mathbf{Y}_0= (\mathbf{y}_0,\cup_{j=1}^k \{y_0^{(j)}\})$, it follows that $\cup_{t=0}^{T}\cup_{j=1}^k \{y_t^{(j)}\} =\Omega$. Thus, by the coupling above, $\cup_{t=0}^{T}\cup_{j=1}^k \{ X_t^{(j)}\} =\Omega$. Similarly, since we can take $T=\tau_{C}$ to be minimal, we have $ \tau_{\mathsf{cov}}\preceq \tau_C$. 
	
	Thus for any pair of coupled trajectories with fixed start vertices $\mathbf{x} $ and $(\mathbf{x},\cup_{j=1}^k \{x_0^{(j)}\})$ the times $\tau_{\mathsf{cov}}$ and $\tau_{C}$ are the same. The final statement then follows by taking expectation.\end{proof}
Lemma \ref{multicovashit} equates the cover time of any Markov chain $\mathbf{P}$ on $\Omega$ (not just rational chains) to a hitting time in a higher dimensional chain $\mathbf{Q}$ on $V$. This result may be useful for studying the cover time of of an arbitrary Markov chain $\mathbf{P}$ on $\Omega$. However, one drawback of this approach is that for many chains $|V|$ is exponential in $|\Omega|$.

Having established Lemmas \ref{hitrat+} and \ref{multicovashit} the proof of Theorem \ref{multicovrat} is simple.

\begin{proof}[Proof of Theorem \ref{multicovrat}] Let $\mathbf{Q}:=\mathbf{Q}(\mathbf{P},k)$ be the auxiliary chain associated with the $k$-multiple Markov chain with transition matrix  $\mathbf{P}$. Then $\Exuc{\mathbf{x}}{\mathbf{P}}{\tau_{\mathsf{cov}}^{(k)}}  = \Exuc{(\mathbf{x},\{x^{(1)}, \dots, x^{(k)}\})}{\mathbf{Q}}{\tau_{C}}$ for any $\mathbf{x}=(x^{(1)}, \dots, x^{(k)})\in\Omega^k$ by Lemma \ref{multicovashit}, where $C=\{(\mathbf{y},\Omega) \mid \mathbf{y}\in \Omega^k \}$. By assumption we have $\Exuc{\mathbf{x}}{\mathbf{P}}{\tau_{\mathsf{cov}}}<\infty$ and so $\mathbf{x}\in B(C)$. It follows from Proposition \ref{hitrat+} that $\Exuc{(\mathbf{x},\{x^{(1)}, \dots, x^{(k)}\})}{\mathbf{Q}}{\tau_{C}} \in \mathbb{Q}$ and so $\Exuc{\mathbf{x}}{\mathbf{P}}{\tau_{\mathsf{cov}}} \in \mathbb{Q}$ as claimed.  
\end{proof}

\section*{Acknowledgements}
We thank Parsiad Azimzadeh and Agelos Georgakopoulos for discussions which lead to a simplification of the proof of Proposition \ref{hitrat+}. The author is currently supported by Engineering and Physical Sciences  Research Council (ESPRC) grant number EP/T004878/1. This work was started while the author was supported by ERC Starting Grant no.\ 679660 at the University of Cambridge.

\bibliographystyle{plainurl}

\begin{thebibliography}{1}
	
	\bibitem{aldousfill}
	David Aldous and James~Allen Fill.
	\newblock Reversible {M}arkov chains and random walks on graphs, 2002.
	\newblock Unfinished monograph, recompiled 2014.
	\newblock URL: \url{https://www.stat.berkeley.edu/~aldous/RWG/book.html}.
	
	\bibitem{MultiAlon}
	Noga Alon, Chen Avin, Michal Kouck\'{y}, Gady Kozma, Zvi Lotker, and Mark~R.
	Tuttle.
	\newblock Many random walks are faster than one.
	\newblock {\em Combin. Probab. Comput.}, 20(4):481--502, 2011.
	\newblock \href {https://doi.org/10.1017/S0963548311000125}
	{\path{doi:10.1017/S0963548311000125}}.
	
	\bibitem{AziFor}
	Parizad Azimzadeh and Peter~A. Forsyth.
	\newblock Weakly chained matrices, policy iteration, and impulse control.
	\newblock {\em SIAM J. Numer. Anal.}, 54(3):1341--1364, 2016.
	\newblock \href {https://doi.org/10.1137/15M1043431}
	{\path{doi:10.1137/15M1043431}}.
	
	\bibitem{POTC}
	Agelos Georgakopoulos, John Haslegrave, Thomas Sauerwald, and John Sylvester.
	\newblock The power of two choices for random walks.
	\newblock {\em Comb. Probab. Comput.}, 31(1):73--100, 2022.
	\newblock \href {https://doi.org/10.1017/S0963548321000183}
	{\path{doi:10.1017/S0963548321000183}}.
	
	\bibitem{TBRW}
	John Haslegrave, Thomas Sauerwald, and John Sylvester.
	\newblock Time dependent biased random walks.
	\newblock {\em {ACM} Trans. Algorithms}, 18(2):12:1--12:30, 2022.
	\newblock \href {https://doi.org/10.1145/3498848} {\path{doi:10.1145/3498848}}.
	
	\bibitem{LevinPeres}
	David~A. Levin and Yuval Peres.
	\newblock {\em Markov chains and mixing times}.
	\newblock American Mathematical Society, Providence, RI, 2017.
	\newblock Second edition.
	\newblock \href {https://doi.org/10.1090/mbk/107} {\path{doi:10.1090/mbk/107}}.
	
\end{thebibliography}

\end{document}